\theoremstyle{plain}
\newtheorem{theorem}{Theorem}[section]
\newtheorem{lemma}[theorem]{Lemma}
\theoremstyle{definition}
\theoremstyle{remark}
\title{Inverse spectral problem of Sturm-Liouville equation with many  frozen arguments}%modification here
\date{}
\author[1]{\small Chung-Tsun Shieh}
\author[2]{\small Tzong-Mo Tsai}
\affil[1]{\footnotesize Department of Mathematics, Tamkang University, New Taipei City, 25137, Taiwan\\ctshieh@mail.tku.edu.tw.}
\affil[2]{\footnotesize General Education Center, Ming Chi University of Technology, New Taipei City, 24301, Taiwan\\tsaitm@mail.mcut.edu.tw}
\begin{document}
\maketitle
\begin{abstract}
This research  was devoted to  investigate the inverse spectral problem of Sturm-Liouville operator with many frozen arguments. Under some assumptions, the authors obtained  uniqueness theorems. At the end, a numerical simulation for the  inverse problem was presented.
\end{abstract}
\medskip
\noindent{\it Key words}: Sturm-Liouville equation,  loaded differential equation, frozen argument, eigenvalues, inverse spectral problem.

\medskip
\noindent{\it 2010 Mathematics Subject Classification}: 34A55 34K29 
\\

%%%%%%%%%%%%%%%%%%%%%%%%%%%%%%%%%%%%%%%%%%%%%%%%%%%%%%%%
\section{Introduction}
In this paper, the authors consider the boundary value problem $L(q(x),F_n, \alpha, \beta)$  which consists of a nonlocal Sturm-Liouville equation
\begin{equation}\label{E1}
-y''(x)+q(x)\sum_{i=1}^ny(a_i)=\lambda y(x),\; 0<x<\pi,
\end{equation}
associated with the boundary conditions 
\begin{equation}\label{E2}
	y^{(\alpha)}(0)=0=y^{(\beta)}(\pi),
\end{equation}
where $\lambda$ is the spectral parameter,  $F_{n}=\{a_i\}_{i=1}^{n}$, $0<a_i<a_j<\pi$ if $i<j,$ is the set of frozen arguments, $\alpha,\; \beta \in \{0,1\}$ and $q(x)\in L^2(0,1).$ Since the analyses for the problems of all cases of $\{\alpha,\beta\}$ which we concerned are similar, we shall only treat the case $\alpha=0=\beta.$ 

As applications, equation \eqref{E1} occurs in the class of the so-called loaded equations\cite{Nakh82, Nakh12}, which contain the values of the unknown function at some fixed points. Such equations have been applied for studying grounder dynamics \cite{Nakh82, NakhBor77}, heating processes \cite{Isk, Nakh76}, feedback-like phenomena in vibrations of a wire affected by a magnetic field \cite{Kral, BH21}.

Let $\{\lambda_m\}_{m\ge1}$ be the eigenvalues of $L(q(x),F_{n},\alpha,\beta)$.  In this paper, the authors studied the following inverse problem:

\medskip
{\bf Inverse Problem 1.} Given $\{\lambda_m\}_{m\ge1},$ $F_{n},\;\alpha$ and $\beta,$ determine the potential $q(x).$

\medskip
Inverse Problem 1 of the case  $n=1$ had been studied by many researchers \cite{BBV, BV, BK, Wang20, KuzIrr, Kuz22}. In \cite{BBV,
BV, BK, TLBCS21}, diverse cases of the triple $(a_1,\alpha,\beta)$ with rational $a_1=j/k,\; \gcd(k,j)=1, $ were considered. In particular, it was established that the
inverse problem may be uniquely solvable or not, depending on the parameters $\alpha,\,\beta$ and also on  $(k,j).$ 
Concerning irrational values of $a_1,$ it was established by Y. P. Wang et al in \cite{Wang20} that  all pairs
$(\alpha,\beta),$  the solution of Inverse Problem 1 for $n=1$ is always uniquely  determined by  the spectrum set.
Recently  Maria Kuznetsova \cite{KuzIrr, Kuz22}  obtained a unified asymptotic formula covering
both irrational and rational cases.

The paper is organized as follows. In the next section, the uniqueness theorems were established. In section 3, numerical algorithm and simulations  were presented.

\section{Main Results}
In this section , we shall present our approach which is different from that in \cite{BBV, BV, BK} and \cite{Wang20}. We shall write $\lambda=\rho^2.$
\begin{lemma}
	Denote $S(x,\lambda)$ the solution of \eqref{E1} associated with the initial condition 
	$$ y(0,\lambda)=0,\;  y'(0,\lambda)=1.$$
	Then $S(x,\lambda)$ satisfies
	\begin{equation}\label{E3}
	S(x,\lambda)=\dfrac{\sin\rho x}{\rho}+\int_0^x \dfrac{\sin\rho(x-t)}{\rho}q(t)\left(\sum_{i=1}^n S(a_i,\lambda)\right)\, dt
	\end{equation}
\end{lemma}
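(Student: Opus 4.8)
The plan is to treat the nonlocal term as an ordinary inhomogeneity. For fixed $\lambda$ the quantity $C:=\sum_{i=1}^n S(a_i,\lambda)$ is a constant in $x$, so by definition $S(\cdot,\lambda)$ solves the linear inhomogeneous equation
\[
y''(x)+\rho^2 y(x)=C\,q(x),\qquad 0<x<\pi,
\]
with $y(0,\lambda)=0$, $y'(0,\lambda)=1$. This reduces the lemma to the classical variation-of-parameters formula for the operator $d^2/dx^2+\rho^2$, after which one substitutes the value of $C$ back in.

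Concretely, I would proceed as follows. First, record that $\dfrac{\sin\rho x}{\rho}$ solves the homogeneous equation $y''+\rho^2 y=0$ with the prescribed initial data (extended by continuity to $\rho=0$), and that the kernel $G(x,t):=\dfrac{\sin\rho(x-t)}{\rho}$ is, for each fixed $t$, the solution of $G_{xx}+\rho^2 G=0$ with $G(t,t)=0$, $G_x(t,t)=1$; equivalently it is assembled from the fundamental system $\{\cos\rho x,\sin\rho x\}$, whose Wronskian equals $\rho$. Second, set
\[
\widetilde S(x):=\frac{\sin\rho x}{\rho}+C\int_0^x \frac{\sin\rho(x-t)}{\rho}\,q(t)\,dt .
\]
Differentiating under the integral sign (Leibniz rule): since the kernel vanishes on the diagonal, $\widetilde S'(x)=\cos\rho x+C\int_0^x\cos\rho(x-t)\,q(t)\,dt$ and then $\widetilde S''(x)=-\rho^2\widetilde S(x)+Cq(x)$ for a.e. $x$, while $\widetilde S(0)=0$ and $\widetilde S'(0)=1$. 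Hence $\widetilde S$ solves the same initial value problem as $S(\cdot,\lambda)$, and by uniqueness for linear second-order initial value problems with an $L^2$ right-hand side, $\widetilde S(x)=S(x,\lambda)$. Inserting $C=\sum_{i=1}^n S(a_i,\lambda)$ into the formula for $\widetilde S$ gives exactly \eqref{E3}.

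The points that need genuine care are bookkeeping rather than conceptual. One is the low regularity of $q$: since $q\in L^2$, the identity $\widetilde S''=-\rho^2\widetilde S+Cq$ holds only almost everywhere and $S(\cdot,\lambda)$ lies in $W^{2,2}$ rather than $C^2$, so ``solution of \eqref{E1}'' must be understood in this absolutely continuous / a.e. sense; the integral equation \eqref{E3} is in fact the natural weak formulation, which makes this automatic. The other, if one wants the lemma to also guarantee that $S$ is well defined, is that existence and uniqueness of $S$ should be settled first — this follows by reading \eqref{E3} as an equation for the unknown function: evaluating at $x=a_i$ and summing gives
\[
C\Bigl(1-\sum_{i=1}^n\int_0^{a_i}\tfrac{\sin\rho(a_i-t)}{\rho}\,q(t)\,dt\Bigr)=\sum_{i=1}^n\tfrac{\sin\rho a_i}{\rho},
\]
which determines $C$, and hence $S$, except on the discrete set of $\lambda$ where the bracketed factor vanishes. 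I expect the variation-of-parameters computation itself to be entirely routine; the main (mild) obstacle is the careful handling of this self-consistency and regularity issue.
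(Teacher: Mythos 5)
Your proposal is correct: rewriting \eqref{E1} as $y''+\rho^2 y=Cq(x)$ with the constant $C=\sum_{i=1}^n S(a_i,\lambda)$ and applying variation of parameters is precisely the routine derivation the paper has in mind when it states ``the derivation is not complicated, we shall omit the details.'' Your additional remarks on the $L^2$ regularity of $q$ and on the self-consistency equation determining $C$ (hence the existence of $S$ except at the zeros of the bracketed factor) go beyond what the paper records, and are a welcome completion rather than a deviation.
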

	The derivation is not complicated, we shall omit the details.

Next, we shall start with the simplest case $n=1$ to show \eqref{E3} works for our study.  For $n=1,$  
\eqref{E3} turns to 
\begin{equation}
S(x,\lambda)=\dfrac{\sin\rho x}{\rho}+S(a_1,\lambda)\int_0^x \dfrac{\sin\rho(x-t)}{\rho}q(t)\, dt.
\end{equation}
Hence we have 
\begin{equation}
	\begin{cases}
		S(a_1,\lambda)&=\dfrac{\sin\rho a_1}{\rho}+S(a_1,\lambda)\int_0^{a_1} \dfrac{\sin\rho(a_1-t)}{\rho}q(t) \, dt,\\
		S(\pi,\lambda)&=\dfrac{\sin\rho \pi }{\rho}+S(a_1,\lambda)\int_0^{\pi}\dfrac{\sin\rho (\pi-t)}{\rho}q(t) \, dt,\\
		\end{cases}
\end{equation}
that is 
\begin{equation}
\begin{bmatrix}\dfrac{\sin\rho a_1}{\rho} & \int_0^{a_1}q(t)\dfrac{\sin \rho(a_1-t)}{\rho}\, dt-1 \\
\dfrac{\sin\rho \pi}{\rho} & \int_0^{\pi}q(t)\dfrac{\sin \rho(\pi-t)}{\rho}\, dt
	\end{bmatrix}\begin{bmatrix} 1 \\ S(a_1,\lambda)\end{bmatrix}=\begin{bmatrix} 0 \\ S(\pi,\lambda)\end{bmatrix}.
\end{equation}
Thus $\lambda$ is an eigenvalue of $L(q(x),F_1, 0, 0)$ if and only if $\lambda$ is a zero of the characteristic function 
\begin{equation}\label{E7}
\triangle_1(\lambda)=\det\left ( \begin{bmatrix}\dfrac{\sin\rho a_1}{\rho} & \int_0^{a_1}q(t)\dfrac{\sin \rho(a_1-t)}{\rho}\, dt-1 \\
\dfrac{\sin\rho \pi}{\rho} & \int_0^{\pi}q(t)\dfrac{\sin \rho(\pi-t)}{\rho}\, dt
	\end{bmatrix}\right).	
\end{equation}
One can easily see that 
\begin{equation}\label{E8}
\triangle_1(\lambda)=\dfrac{\sin\rho\pi}{\rho}	+\dfrac{\sin\rho a_1}{\rho}\int_0^\pi q(t)\dfrac{\sin\rho(\pi-t)}{\rho}\,dt-\dfrac{\sin\rho \pi}{\rho}\int_0^{a_1}q(t)\dfrac{\sin \rho(a_1-t)}{\rho}\, dt
\end{equation}
After some calculation, we  have 
\begin{equation}\label{E9}
\triangle_1(\lambda)=\dfrac{\sin\rho\pi}{\rho}+\dfrac{\sin\rho a_1}{\rho^2}\int_{a_1}^\pi q(t)\sin\rho(\pi-t)\, dt +\dfrac{\sin\rho(\pi-a_1)}{\rho^2}\int_0^{a_1}q(t)\sin\rho t\, dt.
\end{equation}
The right side of \eqref{E9} is the form of characteristic function which appeared in \cite{Wang20}. 

To go one step further, we shall consider the case for $n=2.$  Then
   \begin{equation}
   	S(x,\lambda)=\dfrac{\sin\rho x}{\rho}+(S(a_1,\lambda)+S(a_2,\lambda))\int_0^x q(t)\dfrac{\sin\rho(x-t)}{\rho}\, dt.
   \end{equation} 
   Hence we have 
   \begin{equation}\label{E11}
   \begin{cases}
   S(a_1,\lambda)&=\dfrac{\sin\rho a_1}{\rho}	+(S(a_1,\lambda)+S(a_2,\lambda))\int_0^{a_1} q(t)\dfrac{\sin\rho(a_1-t)}{\rho}\, dt,\\
S(a_2,\lambda)&=\dfrac{\sin\rho a_2}{\rho}	+(S(a_1,\lambda)+S(a_2,\lambda))\int_0^{a_2} q(t)\dfrac{\sin\rho(a_2-t)}{\rho}\, dt,\\
S(\pi,\lambda)&=\dfrac{\sin\rho \pi}{\rho}	+(S(a_1,\lambda)+S(a_2,\lambda))\int_0^{\pi} q(t)\dfrac{\sin\rho(\pi -t)}{\rho}\, dt.
   \end{cases}
	\end{equation}
\eqref{E11} leads to 
\begin{equation*}
	\begin{bmatrix}
	\dfrac{\sin\rho a_1}{\rho} &\int_0^{a_1}q(t)\dfrac{\sin\rho(a_1-t)}{\rho}\, dt -1 & \int_0^{a_1}q(t) \dfrac{\sin \rho (a_1-t)}{\rho}\, dt \\
	\dfrac{\sin\rho a_2}{\rho} &\int_0^{a_2}q(t) \dfrac{\sin \rho (a_2-t)}{\rho}\, dt  &\int_0^{a_2}q(t)\dfrac{\sin\rho(a_2-t)}{\rho}\, dt -1\\
	\dfrac{\sin\rho \pi}{\rho}&\int_0^{\pi}q(t) \dfrac{\sin \rho (\pi-t)}{\rho}\, dt & \int_0^{\pi}q(t) \dfrac{\sin \rho (\pi-t)}{\rho}\, dt
	\end{bmatrix}\begin{bmatrix} 1 \\ S(a_1,\lambda) \\ S(a_2,\lambda)\end{bmatrix}=\begin{bmatrix} 0 \\ 0 \\ S(\pi,\lambda)\end{bmatrix}.
\end{equation*}

Thus $\lambda$ is an eigenvalue of $L(q(x),F_2, 0, 0)$ if and only if $\lambda$ is a zero of the characteristic function
\begin{equation}\label{E12}
\triangle_2(\lambda)=\det \left(\begin{bmatrix}
	\dfrac{\sin\rho a_1}{\rho} &\int_0^{a_1}q(t)\dfrac{\sin\rho(a_1-t)}{\rho}\, dt -1 & \int_0^{a_1}q(t) \dfrac{\sin \rho (a_1-t)}{\rho}\, dt \\
	\dfrac{\sin\rho a_2}{\rho} &\int_0^{a_2}q(t) \dfrac{\sin \rho (a_2-t)}{\rho}\, dt  &\int_0^{a_2}q(t)\dfrac{\sin\rho(a_2-t)}{\rho}\, dt -1\\
	\dfrac{\sin\rho \pi}{\rho}&\int_0^{\pi}q(t) \dfrac{\sin \rho (\pi-t)}{\rho}\, dt & \int_0^{\pi}q(t) \dfrac{\sin \rho (\pi-t)}{\rho}\, dt
	\end{bmatrix}\right).
\end{equation}
Note that one achieves 
\begin{align}
\triangle_2(\lambda)&=\begin{vmatrix} \dfrac{\sin\rho a_1}{\rho} &\int_0^{a_1}q(t)\dfrac{\sin\rho(a_1-t)}{\rho}\, dt -1 & 1 \\
	\dfrac{\sin\rho a_2}{\rho} &\int_0^{a_2}q(t) \dfrac{\sin \rho (a_2-t)}{\rho}\, dt  & -1\\
	\dfrac{\sin\rho \pi}{\rho}&\int_0^{\pi}q(t) \dfrac{\sin \rho (\pi-t)}{\rho}\, dt & 0 \end{vmatrix}	\label{E13} \\
	&=\dfrac{\sin \rho \pi}{\rho}+\dfrac{\sin\rho a_1}{\rho^2}\int_0^\pi q(t)\sin\rho(\pi-t)\, dt-\dfrac{\sin\rho \pi }{\rho^2}\int_0^{a_1}q(t)\sin\rho(a_1-t)\, dt \notag \\  & \; +\dfrac{\sin\rho a_2}{\rho^2}\int_0^\pi q(t)\sin\rho(\pi-t)\, dt -\dfrac{\sin\rho \pi}{\rho^2}\int_0^{a_2}q(t)\sin\rho(a_2-t)\,dt\notag 
\end{align}
after a simplification of \eqref{E12}.
Denote $\Lambda(q(x),F_2)=\{\lambda_m\}_{m=1}^\infty$ the eigenvalues of $L(q(x),F_2,0,0),$ where $F_2=\{a_1,a_2\}.$ By Hadamard's factorization theorem, we have 
\begin{equation}\label{E14}	
\triangle_2(\lambda)=\pi\prod_{n=1}^\infty \dfrac{(\lambda_n-\lambda)}{n^2}.\end{equation}

From the analysis above, we have  following uniqueness theorem 
\begin{theorem}\label{T1}
Let $F_2=\{a_1,a_2\},$ where $0< a_1<a_2<\pi.$
Define $G_2(\rho)=\sin(\rho a_1)+\sin(\rho a_2),$ and denote $Z(G_2)$ the set of zeros of $G_2(\rho).$ Suppose $Z(G_2)\cap \mathbb N=\emptyset $ and 
 $\Lambda(q(x),F_2)=\Lambda(\tilde q(x),F_2).$ Then $q(x)=\tilde q(x)$ almost everywhere in $(0,\pi).$  
\end{theorem}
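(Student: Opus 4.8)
The plan is to use the closed form \eqref{E13} of the characteristic function, exploiting the hypothesis $Z(G_2)\cap\mathbb N=\emptyset$ to annihilate every term that carries a factor $\sin\rho\pi$, and then to recover $q-\tilde q$ from its sine Fourier coefficients.

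First I would observe that, by the Hadamard factorization \eqref{E14}, the entire function $\triangle_2(\lambda)$ is completely determined by the sequence of its zeros $\Lambda(q(x),F_2)$: the normalizing constants $\pi$ and $n^{-2}$ are fixed once and for all by the asymptotics $\triangle_2(\lambda)\sim\rho^{-1}\sin\rho\pi$ coming from the free term of \eqref{E13}, independently of $q$. Consequently the assumption $\Lambda(q(x),F_2)=\Lambda(\tilde q(x),F_2)$ forces $\triangle_2(\lambda)\equiv\tilde\triangle_2(\lambda)$ on $\mathbb C$, where $\tilde\triangle_2$ denotes the characteristic function associated with $\tilde q$.

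Next, writing $p(x):=q(x)-\tilde q(x)\in L^2(0,\pi)$, I would multiply \eqref{E13} and its tilde analogue by $\rho^2$ and subtract. The term $\rho\sin\rho\pi$ cancels, and what remains is the identity
\begin{equation*}
G_2(\rho)\int_0^\pi p(t)\sin\rho(\pi-t)\,dt=\sin\rho\pi\left(\int_0^{a_1}p(t)\sin\rho(a_1-t)\,dt+\int_0^{a_2}p(t)\sin\rho(a_2-t)\,dt\right),
\end{equation*}
valid for every $\rho\in\mathbb C$. Now I would set $\rho=m$ with $m\in\mathbb N$: the right-hand side vanishes because $\sin m\pi=0$, while $G_2(m)\ne0$ by the standing hypothesis $Z(G_2)\cap\mathbb N=\emptyset$. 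Hence $\int_0^\pi p(t)\sin m(\pi-t)\,dt=0$ for all $m\ge1$, and since $\sin m(\pi-t)=(-1)^{m+1}\sin mt$ this says precisely that every sine Fourier coefficient of $p$ on $(0,\pi)$ vanishes. By completeness of $\{\sin mt\}_{m\ge1}$ in $L^2(0,\pi)$ we conclude $p\equiv0$, i.e.\ $q=\tilde q$ almost everywhere.

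The computation itself is short; the step that deserves the most care is the reduction $\triangle_2\equiv\tilde\triangle_2$. One must confirm that $\triangle_2$ really is entire in $\lambda$ (the apparent pole at $\rho=0$ in \eqref{E13} cancels) of order at most $1/2$ with the indicated leading term, so that Hadamard's theorem in the form \eqref{E14} applies, and that equality of the two eigenvalue sequences --- counted with multiplicity --- genuinely pins down $\triangle_2$ rather than leaving a multiplicative constant free; this is exactly where the universal free term $\rho^{-1}\sin\rho\pi$ is used. The role of the hypothesis on $G_2$ is then precisely to guarantee that evaluating the resulting identity at the integers --- the only points where the $\sin\rho\pi$ terms die --- loses no information about $p$.
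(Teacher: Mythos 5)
Your proposal is correct and follows essentially the same route as the paper: both pass from equality of spectra to $\triangle_2\equiv\tilde\triangle_2$ via the Hadamard factorization \eqref{E14}, subtract the two instances of \eqref{E13} so that only $\hat q=q-\tilde q$ appears, evaluate at $\rho=m\in\mathbb N$ where the $\sin\rho\pi$ terms vanish, and use $G_2(m)\neq0$ together with completeness of $\{\sin mt\}$ to conclude $\hat q=0$. The only cosmetic difference is that the paper phrases the key cancellation as linear independence of the columns $(\sin ma_1/m,\sin ma_2/m,0)^{\mathrm t}$ and $(1,-1,0)^{\mathrm t}$ in the determinant, whereas you expand the determinant explicitly; your added remarks on why $\triangle_2$ is entire and why the normalization in \eqref{E14} is pinned down are a welcome clarification of a step the paper leaves implicit.
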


Note if $(a_1\pm a_2)/\pi$ are irrational, then $Z(G_2)\cap \mathbb N=\emptyset. $

\begin{proof}
Denote $\hat q(x)=q(x)-\tilde q(x),$  $\triangle_2(\lambda)$ the characteristic function of $L(q(x),F_2,0,0)$ and $\tilde \triangle_2(\lambda)$ the characteristic function of $L(\tilde q(x),F_2,0,0).$ According to \eqref{E13} and \eqref{E14}, one can obtain
\begin{align}
	\triangle_2(\lambda)-\tilde\triangle_2(\lambda)=0=\begin{vmatrix} \dfrac{\sin\rho a_1}{\rho} &\int_0^{a_1}\hat q(t)\dfrac{\sin\rho(a_1-t)}{\rho}\, dt  & 1 \\
	\dfrac{\sin\rho a_2}{\rho} &\int_0^{a_2}\hat q(t) \dfrac{\sin \rho (a_2-t)}{\rho}\, dt  & -1\\
	\dfrac{\sin\rho \pi}{\rho}&\int_0^{\pi}\hat q(t) \dfrac{\sin \rho (\pi-t)}{\rho}\, dt & 0 \end{vmatrix}	
	\end{align}
The assumption $Z(G_2)\cap \mathbb N=\emptyset $ ensure   $(\dfrac{\sin m a_1}{m}, \dfrac{\sin m a_2}{m},\dfrac{\sin m \pi}{m})^{\text{t}}$ and $(1,-1,0)^{\text{t}}$ are independent,
we have $$c_m\int_0^\pi \hat q(t) \sin m(\pi-t)\, dt=0 $$
for some $c_m\ne 0$  and for all $m\in \mathbb N.$  We have $\hat q(t)=0$ almost everywhere in $(0,\pi).$ This completes the proof.
 \end{proof}

Apply the same arguments, Theorem \ref{T1} can be generalized as following 
\begin{theorem}\label{T2}
	Let $F_n=\{a_1,a_2,\dots, a_n\},$ where $0< a_1<a_2	<\cdots<a_n<\pi.$ Define $G_n(\rho)=\sum_{i=1}^n \sin(\rho a_i).$
Suppose the following conditions 
\begin{itemize}
	\item[(i)] $Z(G_n)\cap \mathbb N=\emptyset,$
	\item[(ii)] $\Lambda(q(x),F_n)=\Lambda(\tilde q(x),F_n)$ 
\end{itemize}
are satisfied.
Then $q(x)=\tilde q(x)$ almost everywhere in $(0,\pi).$ In particular for $n=1,$ condition (1) is equivalent to  ``$\pi/a_1\notin \mathbb Q.$"
\end{theorem}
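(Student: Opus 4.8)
\emph{Proof plan.} The plan is to run the computation already performed for $n=1,2$ in the general case. First I would substitute $x=a_1,\dots,a_n$ and $x=\pi$ into the integral equation \eqref{E3}; this gives a linear system in the unknowns $S(a_1,\lambda),\dots,S(a_n,\lambda)$ which rearranges into an $(n+1)\times(n+1)$ matrix identity $M_n(\lambda)\,(1,S(a_1,\lambda),\dots,S(a_n,\lambda))^{\mathrm{t}}=(0,\dots,0,S(\pi,\lambda))^{\mathrm{t}}$. The first column of $M_n$ is the sine column $s:=\bigl(\tfrac{\sin\rho a_1}{\rho},\dots,\tfrac{\sin\rho a_n}{\rho},\tfrac{\sin\rho\pi}{\rho}\bigr)^{\mathrm{t}}$, and for $j=1,\dots,n$ its $(j+1)$-st column equals $v-e_j$, where $e_j$ is the $j$-th standard basis vector of $\mathbb{R}^{n+1}$ and $v$ is the common integral column with entries $\int_0^{a_i}q(t)\tfrac{\sin\rho(a_i-t)}{\rho}\,dt$ ($i=1,\dots,n$) and $\int_0^{\pi}q(t)\tfrac{\sin\rho(\pi-t)}{\rho}\,dt$. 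As in \eqref{E12}, $\lambda$ is an eigenvalue of $L(q(x),F_n,0,0)$ iff $\triangle_n(\lambda):=\det M_n(\lambda)=0$. Subtracting the second column from columns $3,\dots,n+1$ does not change the determinant and turns the $(j+1)$-st column into $e_1-e_j$ ($j=2,\dots,n$), giving
\[
\triangle_n(\lambda)=\det\bigl[\,s\ \big|\ v-e_1\ \big|\ e_1-e_2\ \big|\ \cdots\ \big|\ e_1-e_n\,\bigr].
\]

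Expanding this determinant along the last row (whose only nonzero entries sit in columns $1$ and $2$) and applying the trigonometric simplification that turned \eqref{E8} into \eqref{E9} should yield
\[
\triangle_n(\lambda)=\frac{\sin\rho\pi}{\rho}+\sum_{i=1}^n\left(\frac{\sin\rho a_i}{\rho^2}\int_{a_i}^\pi q(t)\sin\rho(\pi-t)\,dt+\frac{\sin\rho(\pi-a_i)}{\rho^2}\int_0^{a_i}q(t)\sin\rho t\,dt\right),
\]
so that $\triangle_n$ is entire in $\lambda$ of order $\le 1/2$ with $\triangle_n(\lambda)\sim\tfrac{\sin\rho\pi}{\rho}$ as $|\rho|\to\infty$; Hadamard's theorem then gives $\triangle_n(\lambda)=\pi\prod_{k\ge 1}(\lambda_k-\lambda)/k^2$, exactly as in \eqref{E14}. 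Hence condition (ii) forces $\triangle_n\equiv\tilde\triangle_n$, where $\tilde\triangle_n$ is the characteristic function built from $\tilde q$. In the reduced form above only the second column depends on the potential, so by multilinearity of the determinant, with $\hat q:=q-\tilde q$ and $\hat v$ the integral column formed from $\hat q$,
\[
0=\triangle_n(\lambda)-\tilde\triangle_n(\lambda)=\det\bigl[\,s\ \big|\ \hat v\ \big|\ e_1-e_2\ \big|\ \cdots\ \big|\ e_1-e_n\,\bigr]\qquad\text{for every }\lambda.
\]

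Finally I would set $\lambda=m^2$, i.e.\ $\rho=m\in\mathbb{N}$. Then $\sin m\pi=0$, so the bottom row of the last matrix is $\bigl(0,\tfrac1m\int_0^\pi\hat q(t)\sin m(\pi-t)\,dt,0,\dots,0\bigr)$; expanding along it leaves an $n\times n$ minor with columns $\bigl(\tfrac{\sin m a_1}{m},\dots,\tfrac{\sin m a_n}{m}\bigr)^{\mathrm{t}}$ and $e_1-e_2,\dots,e_1-e_n$. Replacing its first row by the sum of all its rows collapses that row to $\bigl(\tfrac{G_n(m)}{m},0,\dots,0\bigr)$, since each column $e_1-e_j$ has coordinate sum $0$; one further expansion identifies the minor as $\pm\,G_n(m)/m$, a nonzero multiple of $G_n(m)$. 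Thus $0=\pm\,G_n(m)\,m^{-2}\int_0^\pi\hat q(t)\sin m(\pi-t)\,dt$ for all $m$, and condition (i), namely $G_n(m)\ne 0$ for every $m\in\mathbb{N}$, gives $\int_0^\pi\hat q(t)\sin m(\pi-t)\,dt=0$. Since $\sin m(\pi-t)=(-1)^{m+1}\sin mt$, this means $\int_0^\pi\hat q(t)\sin mt\,dt=0$ for all $m\ge 1$, and completeness of $\{\sin mt\}_{m\ge 1}$ in $L^2(0,\pi)$ yields $\hat q=0$ a.e. For $n=1$ one has $G_1(\rho)=\sin\rho a_1$ with zero set $\{k\pi/a_1:k\in\mathbb{Z}\}$, and such a zero is a positive integer precisely when $\pi/a_1\in\mathbb{Q}$, so condition (i) is equivalent to $\pi/a_1\notin\mathbb{Q}$. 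I expect the main obstacle to be the two determinant reductions — in particular tracking the signs in the cofactor expansions so as to certify that the constant $\pm G_n(m)/m$ is genuinely nonzero — together with the verification that the asymptotic expansion of $\triangle_n$, and hence its Hadamard product representation, hold for arbitrary $n$; the remaining steps are a direct transcription of the $n=1,2$ arguments already in the paper.
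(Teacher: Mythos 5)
Your proposal is correct and follows essentially the same route as the paper: the same $(n+1)\times(n+1)$ linear system and characteristic determinant, the same column reduction to $[\,s \mid v-e_1 \mid e_1-e_2 \mid \cdots \mid e_1-e_n\,]$ (the paper's $E_n(\lambda)$), the Hadamard factorization to equate $\triangle_n$ and $\tilde\triangle_n$, evaluation at $\rho=m$, and condition (i) to extract $\int_0^\pi \hat q(t)\sin m(\pi-t)\,dt=0$ and conclude by completeness of the sine system. In fact you supply more detail than the paper does at the two places it glosses over, namely the multilinearity step isolating $\hat v$ in the second column and the explicit evaluation of the surviving minor as $(-1)^{n-1}G_n(m)/m$.
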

\begin{proof}
The proof follows  the same idea as that in the proof of Theorem \ref{T1} except the computation is slightly complicated. Since
$$S(x,\lambda)=\dfrac{\sin \rho x}{\rho}+\left(\sum_{i=1}^n S(a_i,\lambda)\right)\int_0^x q(t)\dfrac{\sin \rho(x-t)}{\rho}	\, dt$$
we have 
\begin{equation}\label{E16}
0=\dfrac{\sin \rho a_j}{\rho}+\sum_{i\ne j}S(a_i,\lambda)\int_0^{a_j}q(t)\dfrac{\sin\rho(a_j-t)}{\rho}\, dt+S(a_j,\lambda)\left(\int_0^{a_j} q(t)\dfrac{\sin\rho(a_j-t)}{\rho}\, dt-1\right) 
\end{equation}
and 
\begin{equation}\label{E17}
S(\pi,\lambda)=\dfrac{\sin\rho\pi}{\rho}+\left(\sum_{i=1}^n S(a_j,\lambda)\right)\int_0^\pi q(t)\dfrac{\sin \rho(\pi-t)}{\rho}\, dt.
\end{equation}
According \eqref{E16} and \eqref{E17}, the characteristic function is 
\begin{align}\label{E18}
\triangle_n(\lambda)=\det (K_n(\lambda))
\end{align}
where 
$$K_n(\lambda)=\begin{bmatrix}
	\dfrac{\sin \rho a_1}{\rho} &\int_0^{a_1} q(t)\dfrac{\sin \rho(a_1-t)}{\rho}\, dt -1 & \int_0^{a_1} q(t)\dfrac{\sin \rho(a_1-t)}{\rho}\, dt   &\hdots & \int_0^{a_1} q(t)\dfrac{\sin \rho(a_1-t)}{\rho}\, dt  \\
	\dfrac{\sin \rho a_2}{\rho} &\int_0^{a_2} q(t)\dfrac{\sin \rho(a_2-t)}{\rho}\, dt  &\int_0^{a_2} q(t)\dfrac{\sin \rho(a_2-t)}{\rho}\, dt-1 &\hdots &\int_0^{a_2} q(t)\dfrac{\sin \rho(a_2-t)}{\rho}\, dt \\
	\vdots & \vdots & \vdots  &\ddots &\vdots \\
	\dfrac{\sin \rho a_n}{\rho} &\int_0^{a_n} q(t)\dfrac{\sin \rho(a_n-t)}{\rho}\, dt  &\int_0^{a_n} q(t)\dfrac{\sin \rho(a_n-t)}{\rho}\, dt &\hdots &\int_0^{a_n} q(t)\dfrac{\sin \rho(a_n-t)}{\rho}\, dt-1\\
	\dfrac{\sin \rho \pi}{\rho} &\int_0^{\pi} q(t)\dfrac{\sin \rho(\pi-t)}{\rho}\, dt  &\int_0^{\pi} q(t)\dfrac{\sin \rho(\pi-t)}{\rho}\, dt  &\hdots   &\int_0^{\pi} q(t)\dfrac{\sin \rho(\pi-t)}{\rho}\, dt
	\end{bmatrix}$$
	\eqref{E18} can be simplified as 
\begin{equation}
\triangle_n(\lambda)=det(E_n(\lambda))
\end{equation}
where 
$$E_n(\lambda)=\begin{bmatrix}\dfrac{\sin \rho a_1}{\rho} &\int_0^{a_1}q(t)\dfrac{\sin\rho(a_1-t)}{\rho}\, dt-1 &1 &1 &1 &\hdots &1 &1 \\
	\dfrac{\sin \rho a_2}{\rho} &\int_0^{a_2}q(t)\dfrac{\sin\rho(a_2-t)}{\rho}\, dt & -1 &0 &0 &\hdots &0 &0\\
	\dfrac{\sin \rho a_3}{\rho} &\int_0^{a_3}q(t)\dfrac{\sin\rho(a_3-t)}{\rho}\, dt & 0 & -1 &0& \hdots &0 &0 \\
	\vdots &\vdots &\vdots &\vdots &\vdots &\ddots &0 &0 \\
	\dfrac{\sin \rho a_n}{\rho} &\int_0^{a_n}q(t)\dfrac{\sin\rho(a_n-t)}{\rho}\, dt &0 &0 &0   &\hdots &0 & -1\\
	\dfrac{\sin \rho \pi}{\rho} &\int_0^{\pi}q(t)\dfrac{\sin\rho(\pi-t)}{\rho}\, dt &0 &0 &0 &0 \hdots &0 &0
	\end{bmatrix}.
$$
Since $\Lambda(q(x),F_n)=\Lambda(\tilde q(x),F_n)$, the corresponding characteristic function $\triangle_n(\lambda)=\tilde \triangle_n(\lambda).$ This yields to 
$$ \triangle_n(\lambda)-\tilde \triangle_n(\lambda)=0=\det (H_n(\lambda)),$$
where 
$H_n(\lambda)$ is the matrix obtained by substituting $\hat q(x)=q(x)-\tilde q(x)$ for $q(x)$ in $E_n(\lambda).$
% $$ H_n(\lambda)= \begin{bmatrix}\dfrac{\sin \rho a_1}{\rho} &\int_0^{a_1}\hat q(t)\dfrac{\sin\rho(a_1-t)}{\rho}\, dt &1 &1 &1 &\hdots &1 &1 \\
% 	\dfrac{\sin \rho a_2}{\rho} &\int_0^{a_2}\hat q(t)\dfrac{\sin\rho(a_2-t)}{\rho}\, dt & -1 &0 &0 &\hdots &0 &0\\
% 	\dfrac{\sin \rho a_2}{\rho} &\int_0^{a_2}\hat q(t)\dfrac{\sin\rho(a_2-t)}{\rho}\, dt & 0 & -1 &0& \hdots &0 &0 \\
	% \vdots &\vdots &\vdots &\vdots &\vdots &\ddots &0 &0 \\
	% \dfrac{\sin \rho a_n}{\rho} &\int_0^{a_n}\hat q(t)\dfrac{\sin\rho(a_n-t)}{\rho}\, dt &0 &0 &0   &\hdots &0 & -1\\
	% \dfrac{\sin \rho \pi}{\rho} &\int_0^{\pi}\hat q(t)\dfrac{\sin\rho(\pi-t)}{\rho}\, dt &0 &0 &0 &0 \hdots &0 &0
	% \end{bmatrix},$$
	% where $\hat q(t)=q(t)-\tilde q(t).$ Denote $H_n^j(\lambda)$ the $j^{th}$ column of $H_n(\lambda).$ condition (i) ensures that  $\{H_n^1(k), H_n^3(k), H_n^4(k),\dots,  H_n(k)^{n+1}\} $ is a linearly independent set for all $k\in \mathbb N.$ Take $\rho=k\in \mathbb N,$ we have 
Thus $$\int_0^\pi \hat q(t)\sin m(\pi-t)\, dt=0$$
	 for all $m\in \mathbb N.$ This yields to 
	 $\hat q(t)=0,$  which  completes the proof. 
\end{proof}

 \section{numerical simulations}
 In this section, we shall present a numerical algorithm  for reconstruction of penitential. For simplicity, we present 
 an example for a Sturm-Liouville operator with two frozen arguments. We shall stat the algorithm first.
  
 \vskip .3cm
 
\noindent{\bf Algorithm.} For a given  spectral set $\{\lambda_n\}_{n=1}^\infty$ of some $L(q(x),F_2,0,0),$ construct the potential $q(x)$ from $\{\lambda_n\}_{n=1}^\infty,$ where $F_2$ satisfies the assumption in Theorem 2.2. 
 
 One can reconstruct $q(x)$ according to the following steps.
 \begin{itemize}
 \item[(1)]
  Construct $\triangle_2(\lambda)=\pi \prod_{n=1}^\infty \dfrac{\lambda_n-\lambda}{n^2}.$
  In practical, we can only have a limited number of eigenvalues $\{\lambda_k\}_{k=1}^n.$ We shall
  substitute  $\triangle_2(\lambda)$ with $\prod_{k=1}^n \dfrac{\lambda_k-\lambda}{k^2})\left(\dfrac{\frac{\sin\sqrt{\lambda}\pi}{\sqrt{\lambda}}}{\prod_{k=1}^n \dfrac{k^2-\lambda}{k^2}}\right).$
  \item[(2)] Apply \eqref{E13}, we have 
    $$d_n=\dfrac{2}{\pi}\int_0^\pi q(t)\sin n(\pi -t)\, dt=\dfrac{2}{\pi }\left(\dfrac{n^2\triangle(n^2)}{\sin na_1+\sin na_2} \right)$$	
  \item[(3)] $$q(t)=\sum_{n=1}^\infty d_n \sin n(\pi-t).$$
  \end{itemize}

\noindent{\bf Example 1}. Suppose we have $a_1=1,\ a_2=\sqrt{2}$ and a partial spectrum $\{\sqrt{\lambda_1}=1.98868,\; \sqrt{\lambda_2}=2,\;, \sqrt{\lambda_3}=3.06656,\; \sqrt{\lambda_4}=4,\; \sqrt{\lambda_5}=5.00142,\;\sqrt{\lambda_6}=6,\;\sqrt{\lambda_7}=6.99975,\;\sqrt{\lambda_8}=8,\; \sqrt{\lambda_9}=8.99976,\; \sqrt{\lambda_{10}}=10\}.$ For this case, we let 
$$\triangle_2(\lambda)=\pi \prod_{n=1}^{10}(\dfrac{\lambda_n-\lambda}{n^2})\cdot\prod_{n=11}^\infty(\dfrac{n^2-\lambda}{n^2}) .$$  
The readers can refer to  figure \ref{fig1}. In this case,  the convergence is very fast.
%\begin{figure}
%\begin{center}
%\includegraphics[scale=0.8]{potential-1-cos(2t).pdf}
%\caption{An approximated  solution for the potential $q(t)=1-\cos(2t).$}\label{fig1}
%\end{center}	
%\end{figure}
 
 \noindent{\bf Example 2.} Let $a_1=1,\ a_2=\sqrt{2}.$ A set of  partial spectrum $\{\sqrt{\lambda_3}=2.90145,\; \sqrt{\lambda_4}=4.09132,\; \sqrt{\lambda_5}=4.98819,\; \sqrt{\lambda_6}=5.98425,\; \sqrt{\lambda_7}=7.00424,\;\sqrt{\lambda_8}=7.99936,\;\sqrt{\lambda_9}=9.00732,\;\sqrt{\lambda_{10}}=9.99528,\;\sqrt{\lambda_{11}}=10.9929,\; \sqrt{\lambda_{12}}=12.0105,\; \sqrt{\lambda_{11}}=10.9929,\; \sqrt{\lambda_{13}}=12.9998,\; \sqrt{\lambda_{14}}=13.9907,\; \sqrt{\lambda_{15}}=15.006,\;\sqrt{\lambda_{16}}=16.0035,\; \sqrt{\lambda_{17}}=16.9935,\;\sqrt{\lambda_{18}}=18.0013,\; \sqrt{\lambda_{19}}=19.0032,\; \sqrt{\lambda_{20}}=19.9997,\; \sqrt{\lambda_{21}}=20.9997\}\cup\{1.99593\pm 0.4925i\}$ was obtained.  In this case, a pair of complex eigenvalues appeared . 
 In this case, we let 
 $$ \triangle_2(\lambda)=\dfrac{((1.99593+ 0.4925i)^2-x)\times((1.99593- 0.4925i)^2-x) }{4} \cdot \prod_{n=3}^{21}(\dfrac{\lambda_n-\lambda}{n^2})\cdot\prod_{n=22}^\infty(\dfrac{n^2-\lambda}{n^2}) .$$
 The readers can refer to figure\ref{fig2} for the result of simulation,  the convergence is poor and The Gibbs phenomenon appears. 
 
%\begin{figure}[!h]
%\begin{center}
%\includegraphics[scale=0.8]{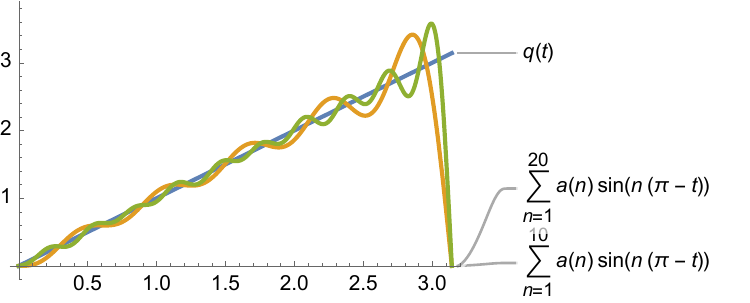}
%\caption{Approximated  solutions for the potential $q(t)=t.$}\label{fig2}
%\end{center}	
%\end{figure}

\begin{figure}[htbp]
\centering
\subfigure{
\begin{minipage}{7cm}
\centering
\includegraphics[width=7cm ]{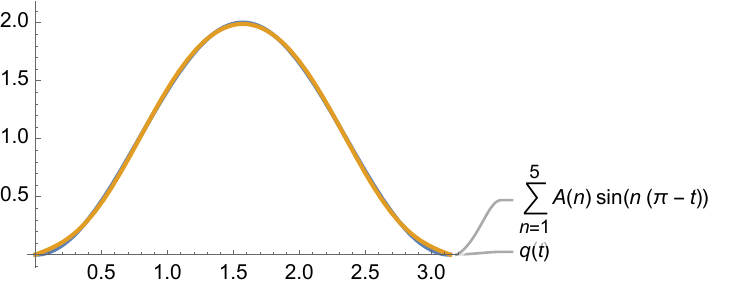}
\caption{An approximated  solution for the potential $q(t)=1-\cos(2t).$}\label{fig1}
\end{minipage}%
}\qquad 
\subfigure{
\begin{minipage}{7cm}
\centering
\includegraphics[scale=0.7]{plot-x.pdf}
\caption{Approximated  solutions for the potential $q(t)=t.$}\label{fig2}

\end{minipage}
}
\end{figure}

\end{document}